\renewcommand{\d}{\,\mathrm{d}}
\newcommand{\dd}{\mathrm{d}}
\newcommand{\R}{\mathbb{R}}
\newcommand{\FFF}{\mathcal{F}}
\newcommand{\NNN}{\mathcal{N}}
\newcommand{\E}{\mathbb{E}}
\DeclareMathOperator{\UB}{\mathrm{UB}}
\DeclareMathOperator{\LB}{\mathrm{LB}}
\theoremstyle{plain}
\newtheorem{theorem}{Theorem}[section]
\newtheorem{proposition}[theorem]{Proposition}
\theoremstyle{definition}
\theoremstyle{remark}
\newtheorem{remark}[theorem]{Remark}
\title{The power of forgetting in statistical hypothesis testing}
\author{Vladimir Vovk}
\begin{document}
\maketitle

\epigraph{How can forgetfulness and efficiency coexist?
  Aren't these two concepts absolute opposites?
  Far from it.}
{Mike Byster, ``The Power of Forgetting'', 2014}

\begin{abstract}
  This paper places conformal testing
  in a general framework of statistical hypothesis testing.
  A standard approach to testing a composite null hypothesis $H$
  is to test each of its elements
  and to reject $H$ when each of its elements is rejected.
  It turns out that we can fully cover conformal testing using this approach
  only if we allow forgetting some of the data.
  However, we will see that the standard approach
  covers conformal testing in a weak asymptotic sense and under restrictive assumptions.
  I will also list several possible directions of further research,
  including developing a general scheme of online testing.

   The version of this paper at \url{http://alrw.net} (Working Paper 37)
   is updated most often.
   The conference version was published in COPA 2023
   (\emph{Proceedings of Machine Learning Research}, 204:347--366, 2023).
\end{abstract}

\section{Introduction}
\label{sec:introduction}

Conformal testing is an interesting application of conformal prediction.
It turns the p-values output by conformal prediction
into dynamic procedures for testing statistical hypotheses.
This paper is a high-level discussion of dynamic testing of statistical hypotheses,
in an attempt to place conformal testing
in the general theory of statistical hypothesis testing.

In conformal testing,
we are usually interested in testing the exchangeability model,
because of its importance in machine learning.
In this paper, however, we will often consider testing other statistical models,
which will shed new light on the relation of conformal testing
to alternative approaches.

Section~\ref{sec:batch_vs_online} briefly reviews
the history of the online approach to hypothesis testing.
Formal exposition starts in Sect.~\ref{sec:approaches},
which is a summary of three approaches to online testing,
including, in Sect.~\ref{subsec:conformal_testing}, a summary of conformal testing.
Section~\ref{sec:scheme} introduces a general scheme
covering all three approaches.
Section~\ref{sec:inadequacy} points out a seemingly unnatural feature of conformal testing,
which is forgetting some of the data
(see Remark~\ref{rem:unnatural} at the end of Sect.~\ref{sec:approaches}).
Section~\ref{sec:adequacy} explains that the extent to which we need forgetting is limited
(albeit under strong assumptions).
Section~\ref{sec:experiments} illustrates some of the points discussed in the earlier sections
using computer simulations,
and Sect.~\ref{sec:conclusion} concludes.

\section{From batch to online hypothesis testing}
\label{sec:batch_vs_online}

The classical theory of statistical hypothesis testing,
as created by Student (\citeyear{Gosset:1908}),
Fisher (\citeyear{Fisher:1925applications}),
Egon Pearson, and Neyman \citep{Neyman/Pearson:1933},
was developed in the batch setting (in the terminology of modern machine learning).
Given a batch of data $z_1,\dots,z_N$,
we would like to test the hypothesis (known as the \emph{null hypothesis})
that $z_1,\dots,z_N$ were generated from a given probability measure
(in which case the null hypothesis is called \emph{simple})
or a probability measure from a given family of probability measures
(in which case the null hypothesis is called \emph{composite}).
The number of observations $N$ (sample size) is chosen in advance.
The classical theory is still dominant in statistical hypothesis testing.

\begin{remark}
  {\upshape
  I do not list Karl Pearson because he was interested
  in statistical tests,
  such as his famous $\chi^2$ test \citep{Pearson:1900},
  that only have an asymptotic (``large-sample'') justification.
  The first exact (``small-sample'') test for an interesting composite null hypothesis
  was developed by Student (\citeyear{Gosset:1908}),
  whose results were rigorously proved and greatly developed
  by Fisher (\citeyear{Fisher:1925applications,Fisher:1925book,Fisher:1935book}).}
\end{remark}

The assumption that $N$ is chosen in advance was removed during World War II
by \citet{Wald:1945,Wald:1947} in the US,
with research along similar lines going on in the UK \citep{Barnard:1946}.
However, Wald's picture was not fully dynamic:
he just made $N$ a stopping time when the decision
(rejection or acceptance of the null hypothesis)
is announced.
The dynamic interpretation in which the likelihood ratio
is interpreted directly as the evidence in favour/against the null/alternative hypothesis
was given by \citet[pp.~459--460 and the last paragraph]{Barnard:1947}.
More recently,
this interpretation has been widely discussed under the name of the \emph{law of likelihood}
\citep{BF:2011}.
(The term ``law of likelihood'' was coined by \citealt[Chap.~5]{Hacking:1965},
but Hacking was only interested in its special case,
namely in comparison of the likelihood ratio with 1.)

The dynamic way of testing a simple null hypothesis has its origin
in Ville's (\citeyear{Ville:1939}) notion of a martingale.
The value of a test martingale
(i.e., a nonnegative martingale with initial value 1)
can be interpreted as the amount of evidence
found against the null hypothesis.
Ville did not have this interpretation in his book
(infinite sequences were his main object of interest),
but it formed gradually in the algorithmic theory of randomness;
e.g., it is stated explicitly in \citet{Vovk/Vyugin:1994}.
This interpretation is the basis of \citet{Shafer/Vovk:2001,Shafer/Vovk:2019}.
It is closely related to Barnard's \citeyear{Barnard:1947} paper mentioned earlier,
since a test martingale can often be represented in the form of a likelihood ratio.

\begin{remark}
  {\upshape
  In particular, for simple null hypotheses,
  a test martingale is a likelihood ratio.
  Therefore, it has a very convincing Bayesian interpretation:
  if \emph{a priori} we regard the null and the alternative
  (the numerator of the likelihood ratio)
  as equally probable, the posterior probability of the null will be $1/(L+1)$,
  where $L$ is the likelihood ratio.}
\end{remark}

How do test martingales work for composite hypotheses?
The standard way of testing a composite null hypothesis in the algorithmic theory of randomness
is to test against each element of the composite null
and then take the infimum of the resulting randomness deficiencies.
See, e.g., \citet{Vovk:1986-proofs}, \citet[Theorem~2]{Vovk/Vyugin:1993},
\citet[Sect.~4]{Bienvenu/etal:2011}, and \citet[Theorem~4.2.1]{Gacs:arXiv2105}.
This suggests gambling against all values of the parameter $\theta$
(indexing the null hypothesis)
obtaining a test martingale $S^{\theta}$ for each $\theta$
and then taking the infimum over $\theta$.
We will do this in Sect.~\ref{sec:inadequacy}.

\section{Three modern ways of dynamic hypothesis testing}
\label{sec:approaches}

In this section we will discuss three approaches, by now standard,
to dynamic hypothesis testing.
Only one of them, conformal testing,
can be, and has been, used for testing the exchangeability model
(the standard statistical model in machine learning)
in non-trivial cases.

First we introduce our framework and notation.
Let $(\Omega,\FFF)$ be a measurable space
equipped with a family $P_\theta$, $\theta\in\Theta$,
of probability measures on $(\Omega,\FFF)$.
We refer to $(\Omega,\FFF)$ as our \emph{sample space}
and to $(P_{\theta}\mid\theta\in\Theta)$ as our \emph{statistical model}.
We are not assuming that the model is parametric
(i.e., that $\Theta$ is a subset of a finite-dimensional Euclidean space $\R^n$);
e.g., $(P_{\theta}\mid\theta\in\Theta)$
may be the set of all exchangeable probability measures on $\R^\infty$.

Our random observations are $Z_1,Z_2,\dots$;
these are random elements on $(\Omega,\FFF)$ taking values
in a measurable space $\mathbf{Z}$,
which is our \emph{observation space}.
Let $z_1,z_2,\dots$ be the realizations of $Z_1,Z_2,\dots$.

Set $\FFF_n:=\sigma(Z_1,\dots,Z_n)$ for $n=0,1,\dots$,
i.e., $\FFF_n$ is the $\sigma$-algebra generated by the first $n$ observations.
The interpretation of $\FFF_n$ is the full information available by time $n$.
The sequence $(\FFF_n)$ of $\sigma$-algebras is called the \emph{natural filtration}.
In general, a \emph{filtration} is an increasing sequence of $\sigma$-algebras,
and we will often be interested in filtrations $(\FFF'_n)$
that are \emph{poorer} than the natural filtration $(\FFF_n)$
in the sense that $\FFF'_n\subset\FFF_n$ for some $n$
(typically, for all $n\ge1$).

\begin{remark}
  {\upshape
  It is more customary to start from a filtration $(\FFF_n)$
  and require that each $Z_n$ be measurable w.r.\ to $\FFF_n$ for each $n$.
  This is a more general setting allowing further sources of information
  apart from the observations $Z_1,Z_2,\dots$.
  We will, however, assume that the observations are the only source of information
  (and will even allow forgetting some aspects of the observations).}
\end{remark}

We regard $(P_{\theta}\mid\theta\in\Theta)$ as our null hypothesis,
and we would like to test whether $z_1,z_2,\dots$ were really generated
from one of the $P_\theta$.

If the statistical model contains only one probability measure $P$,
online testing consists in choosing a \emph{test martingale} $S_n$, $n=0,1,\dots$,
i.e., a sequence of random variables such that $S_n$ is $\FFF_n$-measurable,
$S_0=1$, and, for each $n=0,1,\dots$,
\[
  \E(S_{n+1}\mid\FFF_{n})
  =
  S_n.
\]
We regard $S_n$ as the capital at time $n$ of a gambler betting
against the null hypothesis $P$.
Next we will discuss three known ways of generalizing this definition
to composite null hypotheses.

\subsection{Element-wise testing}
\label{subsec:element-wise}

The most basic and standard generalization
is to gamble against each $P_\theta$ separately
and to regard the null hypothesis falsified
to the degree that all of $P_\theta$ have been falsified.
Formally, for each $\theta\in\Theta$,
we fix a test martingale $S^{\theta}$,
and we then define
\begin{equation}\label{eq:S}
  S_n
  :=
  \inf_{\theta\in\Theta}
  S^{\theta}_n.
\end{equation}
Any process $S$ that can be obtained in this way will be referred to
as an \emph{element-wise test},
and I will sometimes refer to this procedure of testing
as \emph{element-wise testing}.

\begin{remark}
  {\upshape
  The function $S^{\theta}_n(\omega)$ of $\theta\in\Theta$, $n$, and $\omega\in\Omega$
  is not assumed to be measurable in $\theta$,
  and so $S_n$ is not a random variable in general.
  (And even if $S^{\theta}_n(\omega)$ were assumed measurable in $\theta$,
  taking an infimum over an uncountable set may destroy measurability.)}
\end{remark}

A special case of element-wise testing \eqref{eq:S}
is used in \citet{Ramdas/etal:2022},
where each $S^\theta$ is defined as the likelihood ratio $\d Q/\d P_{\theta}$
and $Q$ is a probability measure that does not depend on $\theta$
(while dependence on $\theta$ is allowed in element-wise testing in general).
We will refer to this special case
as \emph{simple element-wise testing}.

\begin{remark}
  {\upshape
  \citet{Ramdas/etal:2022} apply their simple element-wise testing scheme
  to testing exchangeability,
  but, as we explain in \citet[Sect.~9.2.1]{Vovk/etal:2022book}
  (see, especially, Remarks~9.7 and~9.8),
  this scheme (based on the maximum likelihood estimate)
  is applicable to testing exchangeability only in toy situations.}
\end{remark}

\subsection{Pivotal testing}
\label{subsec:neo-fiducial}

The second approach goes back to Fisher's fiducial statistics
and was widely promoted by, e.g., George Barnard and Donald Fraser.
Recent work includes Peter McCullagh's (see \citealt{McCullagh/etal:2009})
and the work on confidence distributions,
including confidence predictive distributions
\citep{Schweder/Hjort:2016,Shen/etal:2018}.

An \emph{online pivotal model} is a pair $(N,Q)$,
where $N$ is a measurable mapping (\emph{normalizing transformation})
$N:\mathbf{Z}^*\to\mathbf{Z}'$ to some measurable space $\mathbf{Z}'$
and $Q$ is a probability measure on $(\mathbf{Z}')^{\infty}$.
We say that it \emph{agrees} with our statistical model $(P_\theta\mid\theta\in\Theta)$
if the distribution of the random sequence
\begin{equation}\label{eq:N}
  (Z'_1,Z'_2,\dots)
  :=
  (N(Z_1),N(Z_1,Z_2),N(Z_1,Z_2,Z_3),\dots),
\end{equation}
where $(Z_1,Z_2,\dots)\sim P_{\theta}$,
is $Q$ (in particular, it does not depend on the parameter $\theta$).
We might say that it \emph{strongly agrees} with our statistical model
if $\{P_\theta\mid\theta\in\Theta\}$ contains
all probability measures on $(\Omega,\FFF)$
for which \eqref{eq:N} is distributed as $Q$;
however, we will not use this stronger notion.

\begin{remark}
  {\upshape
  The definition of an online pivotal model
  can be trivially extended by allowing $N$ to depend on the parameter value $\theta\in\Theta$.
  Such an extension would even better agree with the term ``pivot'',
  since in statistics pivotal quantities are allowed to depend on $\theta$
  (those that do not depend on $\theta$ are usually called ``ancillary statistics'',
  but a disadvantage of the term ``ancillary statistic''
  is that it is usually associated with conditional inference).
  In this paper we will only be interested in examples
  where the normalizing transformation $N$ does not depend on $\theta$.}
\end{remark}

Let me give three simple examples of online pivotal models.
The \emph{full Gaussian pivotal model} is $(N,Q)$ where
\begin{align*}
  N(z_1)
  &:=0,\\
  N(z_1,\dots,z_n)
  &:=
  (z_n - z_1)/(z_2-z_1)
  \quad\text{for $n\ge2$},
\end{align*}
and $Q$ is the push-forward of the standard Gaussian measure $\NNN^{\infty}_{0,1}$ on $\R^{\infty}$
under the mapping $(Z_1,Z_2,\dots)\mapsto(Z'_1,Z'_2,\dots)$ defined by \eqref{eq:N}.
(Let us set, e.g., $0/0:=0$, here and below.)
This online pivotal model agrees
with the 2-parameter Gaussian statistical model
$(\NNN^{\infty}_{\mu,\sigma^2}\mid\mu\in\R,\;\sigma>0)$
(where $\NNN_{\mu,\sigma^2}$ is parametrized by the mean $\mu$ and variance $\sigma^2$).
The other two example are, in some sense, submodels of this model.

The \emph{Gaussian pivotal model with variance 1} is $(N,Q)$ where
\begin{equation}\label{eq:N-var1}
  N(z_1,\dots,z_n)
  :=
  z_n - z_1
  \quad\text{for $n\ge1$},
\end{equation}
and $Q$ is the push-forward of $\NNN^{\infty}_{0,1}$ under \eqref{eq:N}.
This online pivotal model agrees
with the 1-parameter Gaussian statistical model
$(\NNN^{\infty}_{\mu,1}\mid\mu\in\R)$ with the variance fixed to 1.
The \emph{Gaussian pivotal model with mean 0} is $(N,Q)$ where
\begin{equation*}
  N(z_1,\dots,z_n)
  :=
  z_n / z_1
  \quad\text{for $n\ge1$},
\end{equation*}
and $Q$ is the push-forward of $\NNN^{\infty}_{0,1}$ under \eqref{eq:N}.
This online pivotal model agrees
with another 1-parameter Gaussian statistical model,
$(\NNN^{\infty}_{0,\sigma^2}\mid\sigma>0)$,
with the mean fixed to 0.

For further examples,
see \citet{McCullagh/etal:2009} (Gauss linear model) and \citet[Sect.~4.1]{Ramdas/etal:2023}.

An online pivotal model reduces (perhaps not perfectly)
a composite null hypothesis to a simple one,
and gambling against a simple null hypothesis is unproblematic.
Formally, set
\begin{equation}\label{eq:sigma-pivot}
  \FFF'_n:=\sigma(Z'_1,\dots,Z'_n)
\end{equation}
for $n=0,1,\dots$,
so that the filtration $(\FFF'_n)$ is typically poorer than the natural filtration $(\FFF_n)$.
Choose a test martingale $S$ w.r.\ to the filtration $(\FFF'_n)$ and probability measure $Q$.
We will then refer to $S$ as a \emph{pivotal test martingale}.

Standard uses of online pivotal models are for producing
prediction sets \citep{McCullagh/etal:2009}, confidence predictive distributions
(\citealp[Sect.~12.4]{Schweder/Hjort:2016};
\citealp{Shen/etal:2018}),
and confidence distributions
(\citealp{Cox:1958}; \citealp{Xie/Singh:2013}; \citealp{Schweder/Hjort:2016}).
However, their adaptation to testing is straightforward,
and is analogous to the step from conformal prediction to conformal testing.

\subsection{Conformal testing}
\label{subsec:conformal_testing}

The exposition in this paper is intended to be self-contained
(apart from the definition of Bayes--Kelly test martingales in Sect.~\ref{sec:experiments}),
but for further details about online compression models,
see \citet[Part~IV]{Vovk/etal:2022book}.

An \emph{online compression model} is a quadruple $(\Sigma,\Box,F,B)$,
where
\begin{itemize}
\item
  $\Sigma$ is a measurable space,
  which is called the \emph{summary space}
  and whose elements are called \emph{summaries};
\item
  $\Box\in\Sigma$ is a fixed summary called the \emph{empty summary};
\item
  $F:\Sigma\times\mathbf{Z}\to\Sigma$ is a measurable function
  called the \emph{forward function};
\item
  $B$ is a Markov kernel mapping $\Sigma$
  to the probability measures on $\Sigma\times\mathbf{Z}$
  such that
  \[
    B(F^{-1}(\sigma)\mid\sigma)=1
  \]
  for each $\sigma\in F(\Sigma\times\mathbf{Z})$.
\end{itemize}
An alternative,
often more convenient
(especially for defining specific examples)
representation of online compression models
is in terms of the corresponding repetitive structures.
Namely, the \emph{repetitive structure}
corresponding to an online compression model $(\Sigma,\Box,F,B)$
consists of the \emph{summarising statistic} $t:\mathbf{Z}^*\to\Sigma$
defined by
\begin{align*}
  t() &:= \Box,\\
  t(z_1,\dots,z_n) &:= F(t(z_1,\dots,z_{n-1}),z_n)
  \quad
  n=1,2,\dots,
\end{align*}
and the inverse transformation mapping
each $\sigma\in t(\mathbf{Z}^n)$ for each $n\in\{1,2,\dots\}$
to the probability measure $P_n(\sigma)$ on $\mathbf{Z}^n$
defined by
\begin{multline*}
  P_n(\dd z_1,\dots,\dd z_n\mid\sigma_n)
  :=
  B(\dd\sigma_0,\dd z_1\mid\sigma_1)
  B(\dd\sigma_1,\dd z_2\mid\sigma_2)
  \dots\\
  B(\dd\sigma_{n-2},\dd z_{n-1}\mid\sigma_{n-1})
  B(\dd\sigma_{n-1},\dd z_n\mid\sigma_n).
\end{multline*}
We say that a probability measure $P$ on the sample space \emph{agrees}
with the online compression model
if, for each $n$, $P_n$ is a version of the conditional probability,
under $P$,
of the first $n$ observations given their summary.
And we say that a statistical model $(P_\theta\mid\theta\in\Theta)$ \emph{agrees}
with the online compression model if each $P_\theta$ does.
(As in the case of online pivotal models,
we do not require that $(P_\theta\mid\theta\in\Theta)$ contain
every probability measure that agrees with the online compression model.)

A \emph{conformity measure} in an online compression model
$(\Sigma,\Box,F,B)$
is a measurable function $A:\Sigma\times\mathbf{Z}\to\overline{\R}$.
The \emph{p-value} generated by the corresponding conformal predictor
after observing $(z_1,\dots,z_n)\in\mathbf{Z}^n$ is
\begin{multline}\label{eq:p}
  p_n
  :=
  B_{\mathbf{Z}}
  \left(
    \left\{
      z\in\mathbf{Z}
      \mid
      A(\sigma_n,z)
      <
      A(\sigma_n,z_n)
    \right\}
    \mid
    \sigma_n
  \right)\\
  +
  \tau_n
  B_{\mathbf{Z}}
  \left(
    \left\{
      z\in\mathbf{Z}
      \mid
      A(\sigma_n,z)
      =
      A(\sigma_n,z_n)
    \right\}
    \mid
    \sigma_n
  \right),
\end{multline}
where $B_{\mathbf{Z}}$ is the marginal distribution
\[
  B_{\mathbf{Z}}(E\mid\sigma)
  :=
  B(\Sigma\times E\mid\sigma)
\]
and $\tau_n\in[0,1]$
(in applications, $\tau_n$ is a number produced by a random number generator).

The main property of validity of conformal prediction
is that the p-values $p_1,p_2,\dots$ output according to \eqref{eq:p}
are independent and distributed uniformly on $[0,1]$
provided the observations are generated from a probability measure
that agrees with the online compression model
and the random numbers $\tau_1,\tau_2,\dots$ are distributed uniformly on $[0,1]$
and independent of the observations and between themselves.
Let $\FFF'_n$ be the $\sigma$-algebra generated by $p_1,\dots,p_n$,
\begin{equation}\label{eq:sigma-OCM}
  \FFF'_n:=\sigma(p_1,\dots,p_n).
\end{equation}
A \emph{conformal test martingale} is a test martingale
w.r.\ to the filtration $(\FFF'_n)$
and the uniform probability measure on $(p_1,p_2,\dots)\in[0,1]^\infty$
(the latter determining the probability measure on $\sigma(\cup_n\FFF'_n)$
underlying the martingale).

Now we can give four standard examples of online compression models,
which we do in terms of the corresponding repetitive structures.
The \emph{exchangeability model} has
$\sigma_n=\lbag z_1,\dots,z_n\rbag$ as the summary of a data sequence $(z_1,\dots,z_n)$,
and $P_n(\sigma_n)$ is the uniform distribution on all orderings of $\sigma_n$
(a fuller definition,
dealing carefully with the possibility of repetitions among the elements of $\sigma_n$,
is that $P_n(\sigma_n)$ is the push-forward of the uniform probability measure
on the $n!$ permutations $\pi:\{1,\dots,n\}\to\{1,\dots,n\}$
under the mapping $\pi\mapsto(z_{\pi(1)},\dots,z_{\pi(n)})$).
In fact, in all but one of our four examples,
$P_n(\sigma_n)$ will be the uniform probability measure on $t_n^{-1}(\sigma_n)$,
where $t_n$ is the restriction of $t$ to $\mathbf{Z}^n$.

In the remaining three examples,
the observation space is the real numbers, $\mathbf{Z}:=\R$.
The \emph{full Gaussian compression model} has the summarizing statistic
\[
  \sigma_n
  =
  t_n(z_1,\dots,z_n)
  :=
  \left(
    \sum_{i=1}^n z_i,
    \sum_{i=1}^n z^2_i
  \right)
\]
(equivalently, the summary of a data sequence $z_1,\dots,z_n$
consists of its empirical mean and standard deviation).
The summarizing statistic for the \emph{Gaussian compression model with variance 1} is
\begin{equation}\label{eq:var_1}
  \sigma_n
  =
  t_n(z_1,\dots,z_n)
  :=
  \sum_{i=1}^n z_i,
\end{equation}
and for the \emph{Gaussian compression model with mean 0} it is
\[
  \sigma_n
  =
  t_n(z_1,\dots,z_n)
  :=
  \sum_{i=1}^n z^2_i.
\]
The conditional distribution $P_n(\sigma_n)$ on $t_n^{-1}(\sigma_n)$
is defined to be the uniform distribution
in the case of the full Gaussian compression model
and Gaussian compression model with mean 0;
in both of these cases $t_n^{-1}(\sigma_n)$ is a sphere,
and the notion of the uniform distribution is meaningful and unambiguous.
For the Gaussian model with variance 1,
whose summarizing statistic is given by \eqref{eq:var_1},
$t_n^{-1}(\sigma_n)$ is not compact for $n>1$,
and the uniform distribution on it does not even exist;
we define $P_n(\sigma_n)$ as the probability measure on $t_n^{-1}(\sigma_n)$
with density proportional to
\begin{equation}\label{eq:density}
  \exp
  \left(
    -\frac12
    \sum_{i=1}^n z_i^2
  \right).
\end{equation}

\begin{remark}
  {\upshape
  The full Gaussian compression model
  (usually referred to simply as the Gaussian compression model)
  is the most general of our three Gaussian compression models,
  but it is easy to extend to a standard model of linear regression,
  the Gauss linear model,
  both in the pivotal \citep{McCullagh/etal:2009}
  and conformal \citep[Sect.~11.4.2]{Vovk/etal:2022book} cases.}
\end{remark}

It is interesting that the p-values $p_3,p_4,\dots$
output by any of these three Gaussian compression models are almost surely deterministic
(do not depend on the random numbers $\tau$),
while $p_1$ has the uniform distribution on $[0,1]$.
The second p-value $p_2$ behaves like $p_1$ in the case of the full Gaussian compression model
and like $p_3,p_4,\dots$ for the other two models.

Each Gaussian probability measure $\NNN^{\infty}_{\mu,\sigma^2}$
agrees with the full Gaussian compression model,
each $\NNN^{\infty}_{\mu,1}$ agrees with the Gaussian compression model with variance 1,
and each $\NNN^{\infty}_{0,\sigma^2}$ agrees with the Gaussian compression model with mean 0.
The density \eqref{eq:density} and the uniform density in the other two cases
can be obtained from this agreement.

The example of the Gaussian model with variance 1
will be most useful for us in this paper (see Sect.~\ref{sec:inadequacy} below).
In the case of pivotal models it is clearly the simplest one
among those that we discussed.
In the case of online compression modelling,
the summarizing statistic \eqref{eq:var_1} is also the simplest one,
but, unusually, the conditional distributions $P_n(\sigma_n)$ are not uniform
(another such example is discussed in \citealt[Sect.~11.3.7]{Vovk/etal:2022book}).

Conformal testing is able to produce non-trivial conformal test martingales
under the standard assumption of exchangeability for two reasons
\cite[Sect.~8.6.1]{Vovk/etal:2022book}:
\begin{itemize}
\item
  these conformal test martingales use a filtration that is poorer
  than the natural filtration generated by the observations $Z_1,Z_2,\dots$
  (we are forgetting some information);
\item
  the martingales are \emph{randomized},
  in the sense of depending on the random numbers $\tau_n$.
\end{itemize}
In this paper I will concentrate on the first reason
(which appears to be more important).
To get rid of the second reason,
we will consider an online compression model that does not require it
(in the sense that the p-values do not depend on the $\tau$s
apart from the first one,~$p_1$).

\begin{remark}\label{rem:unnatural}
  {\upshape
  The expression ``unnatural feature'' used in Sect.~\ref{sec:introduction}
  referred to the underlying filtration $(\FFF'_n)$ defined by~\eqref{eq:sigma-OCM}
  being different from the natural filtration.
  (Of course, the filtration $(\FFF'_n)$ defined by~\eqref{eq:sigma-pivot}
  also involves forgetting.)}
\end{remark}

\section{General scheme of online testing}
\label{sec:scheme}

In this section we sketch (somewhat informally) a general testing scheme
covering conformal testing and the other two approaches considered
in the previous section.
When processing the random observations $Z_1,Z_2,\dots$
while testing $(P_\theta\mid\theta\in\Theta)$ as our null hypothesis,
we proceed as follows.
\begin{enumerate}
\item\label{it:random}
  We use a random number generator producing independent $\tau_1,\tau_2,\dots$
  that are uniformly distributed on $[0,1]$;
  the sequence of $\tau$ is required to be independent of $Z_1,Z_2,\dots$
  for each $\theta\in\Theta$.
\item\label{it:p}
  We then transform the sequence of observations $z_1,z_2,\dots$
  and each parameter value $\theta\in\Theta$ to $z^{\theta}_1,z^{\theta}_2,\dots$:
  each $z^{\theta}_n$ is a function of $\theta$ and $z_1,\dots,z_n$
  that is measurable for each fixed value of $\theta$
  (no measurability in $\theta$ is required).
  Typically this step reduces the information
  contained in $z_1,z_2,\dots$ (for each $\theta$).
\item\label{it:gamble}
  Next, for each $\theta\in\Theta$, we gamble
  against the reduced sequence $z^{\theta}_1,z^{\theta}_2,\dots$
  and $\tau_1,\tau_2,\dots$
  obtaining a test martingale $S^\theta$
  (w.r.\ to the reduced filtration extended by the $\tau$)
  under $P_\theta$.
  Equivalently, we gamble against the extended observations $(z^{\theta}_n,\tau_n)$.
  Our capital $S^\theta_n$ at time $n$
  is a function of $(z^\theta_1,\tau_1),\dots,(z^\theta_n,\tau_n)$.
\item\label{it:inf}
  Finally, we use \eqref{eq:S} as the amount of evidence
  found against $(P_\theta)$ at time $n$.
\end{enumerate}

A natural question is whether this scheme is really general,
but it does cover the three methods described in the previous section.
These are special cases:
\begin{itemize}
\item
  In the simple element-wise testing scheme of \citet{Ramdas/etal:2022}
  (see Sect.~\ref{subsec:element-wise}),
  item~\ref{it:random} is not needed,
  the transformation in item~\ref{it:p} is identical
  (i.e., there is no transformation),
  and the gambling method in item~\ref{it:gamble}
  is to use likelihood ratios with $P_\theta$ in the denominator
  and the same probability measure (a mixture of $P_\theta$s)
  in the numerator.
\item
  In the neo-fiducial testing of Sect~\ref{subsec:neo-fiducial},
  item~\ref{it:random} is not needed.
  The transformation in item~\ref{it:p}
  and gambling in item~\ref{it:gamble}
  do not depend on $\theta$,
  and item~\ref{it:inf} is not needed.
\item
  In conformal testing,
  the transformation in item~\ref{it:p}
  and gambling in item~\ref{it:gamble}
  do not depend on $\theta$.
  Therefore, item~\ref{it:inf} is not needed.
  For some online compression models,
  such as the Gaussian models discussed earlier,
  item~\ref{it:random} is also not needed
  (apart from the first few p-values).
\end{itemize}

\section{Need for forgetting}
\label{sec:inadequacy}

Conformal testing often works well for testing the exchangeability model
\citep[Part~III]{Vovk/etal:2022book}.
On the other hand, it is obvious that,
without forgetting,
no successful gambling is possible against the null hypothesis of exchangeability,
or even against the stronger model of randomness \citep[Sect.~2.1.1]{Vovk/etal:2022book}:
if under the null hypothesis
there are no restrictions on the probability distribution of one observation,
our capital can only go down (or stay at the same level).
This is discussed in detail in \citet[Sect.~8.6.1]{Vovk/etal:2022book}
and stated in \citet{Ramdas/etal:2022} as Theorem~17.

Therefore, it is essential to allow the test martingales in the element-wise scheme
to depend on the value of the parameter $\theta$ if we want to avoid forgetting.
In this section we will give an example
where even such dependence does not allow us to recover
results attainable by pivotal and conformal methods.

As before,
we observe a sequence $Z_1,Z_2,\dots\in\mathbf{Z}$
generated by a probability measure in a family $(P_\theta\mid\theta\in\Theta)$,
and we would like to have an online measure of evidence found against $(P_\theta\mid\theta\in\Theta)$
as null hypothesis.
For each $\theta$,
we take a test martingale $S^\theta$ w.r.\ to $P_\theta$
and the natural filtration $\FFF=(\FFF_n)$ (i.e., $\FFF_n$ is generated by $Z_1,\dots,Z_n$),
and consider the element-wise test \eqref{eq:S}
as the amount of evidence
found against $(P_\theta\mid\theta\in\Theta)$ at time $n$.

\subsection{An example for pivotal testing}
\label{subsec:pivotal}

The following simple example shows the inadequacy of element-wise tests.
We are testing the Gaussian pivotal model with variance 1,
or the statistical model $(\NNN^{\infty}_{\mu,1}\mid\mu\in\R)$.
The normalizing transformation \eqref{eq:N-var1}
acts on the random observations as
\[
  Z_1,Z_2,\ldots
  \mapsto
  Z'_1,Z'_2,\dots,
\]
where $Z'_n:=Z_n-Z_1$,
so that $Z'_n\sim\NNN_{0,2}$ for $n\ge2$.
Consider the process
\[
  S_n
  :=
  \begin{cases}
    1 & \text{if $n\le1$} \\
    1/\NNN_{0,2}([-1,1]) & \text{if $n\ge2$ and $Z'_2\in[-1,1]$} \\
    0 & \text{if $n\ge2$ and $Z'_2\notin[-1,1]$}.
  \end{cases}
\]
It can be considered both as a function of $Z_1,Z_2,\dots$
and as a function of $Z'_1,Z'_2,\dots$,
but it is a martingale only as a function of $Z'_1,Z'_2,\dots$
(i.e., w.r.\ to the reduced filtration $(\FFF'_n)$,
where $\FFF'_n$ is generated by $Z'_1,\dots,Z'_n$).
If we express it as a function of $Z_1,Z_2,\dots$, it becomes
\begin{equation}\label{eq:S_n}
  S_n
  =
  \begin{cases}
    1 & \text{if $n\le1$} \\
    1/\NNN_{0,2}([-1,1]) & \text{if $n\ge2$ and $Z_2-Z_1\in[-1,1]$} \\
    0 & \text{if $n\ge2$ and $Z_2-Z_1\notin[-1,1]$}.
  \end{cases}
\end{equation}

Let us check that $S$ is not covered by element-wise testing,
i.e., $S_n\le\inf_{\mu}S^{\mu}_n$ is violated for some $n$,
$S^{\mu}$ being a natural test martingale
(i.e., a test martingale w.r.\ to the natural filtration)
under $\NNN^{\infty}_{\mu,1}$.
In fact, we will see that $S_n$ is not dominated
by any natural test martingale $S_n^{\mu}$ at times $n=1$ and $n=2$.
Indeed, if it is, we must have
\[
  S_1^{\mu}(z_1)
  \ge
  \max
  \left(
    \frac{\NNN_{\mu,1}([z_1-1,z_1+1])}{\NNN_{0,2}([-1,1])},
    1
  \right)
\]
for any $z_1\in\R$.
Notice that
\[
  \frac{\NNN_{\mu,1}([z_1-1,z_1+1])}{\NNN_{0,2}([-1,1])} > 1
\]
holds for a nontrivial range of $z_1$:
for example,
\[
  \frac{\NNN_{\mu,1}([\mu-1,\mu+1])}{\NNN_{0,2}([-1,1])}
  =
  \frac{\NNN_{0,1}([-1,1])}{\NNN_{0,2}([-1,1])}
  \approx
  1.31
  >
  1.
\]
Therefore, the expectation of $S_1^{\mu}(z_1)$ under $z_1\sim\NNN_{\mu,1}$ must exceed 1,
which contradicts $S^\mu$ being a test martingale.

\subsection{An example for conformal testing}
\label{subsec:conformal}

Our example for conformal testing will be a simple modification
of the example for pivotal models given in the previous subsection.
We again consider the Gaussian model with variance 1,
but now it is the online compression model
with summarizing statistic \eqref{eq:var_1} and conditional density \eqref{eq:density}.
The equality
\[
  z_1^2 + z_2^2
  =
  \frac{(z_1+z_2)^2}{2}
  +
  \frac{(z_1-z_2)^2}{2}
\]
shows that $P_2(\sigma_2)$ generates $(Z_1,Z_2)$ with $Z_2-Z_1\sim\NNN_{0,2}$,
and so
\begin{equation}\label{eq:p_2}
  p_2
  =
  \Phi((Z_2-Z_1)/\sqrt{2})
\end{equation}
($\Phi$ standing for the standard Gaussian cumulative distribution function)
if we choose $A(\sigma,z):=z$ as conformity measure.

Now we have a conformal test martingale
\begin{equation*}
  S_n
  :=
  \begin{cases}
    1 & \text{if $n\le1$} \\
    1/\NNN_{0,2}([-1,1]) & \text{if $n\ge2$ and $p_2\in[\Phi(-1/\sqrt{2}),\Phi(1/\sqrt{2})]$} \\
    0 & \text{if $n\ge2$ and $p_2\notin[\Phi(-1/\sqrt{2}),\Phi(1/\sqrt{2})]$}
  \end{cases}
\end{equation*}
in analogy with \eqref{eq:S_n};
in fact, as function of $Z_1,Z_2,\dots$ it is identical to \eqref{eq:S_n}.
As before, it is not dominated by any natural martingale $S^{\mu}$ w.r.\ to any $\NNN^{\infty}_{\mu,1}$.

\subsection{Another way of forgetting}

The main concept of forgetting studied in this paper
is replacing the natural filtration $(\FFF_n)$
by poorer filtration $(\FFF'_n)$.
In the case of the pivotal models,
this is the only natural kind of forgetting.
For the Gaussian model with variance 1,
we forget the first observation $z_1$,
and it leads to forgetting in the sense
of reducing the filtration, \eqref{eq:sigma-pivot}.

However, in the case of online compression models,
the very definition of such models is sometimes explained in terms of forgetting:
the summary $t(z_1,\dots,z_n)$ represents all useful information
contained in the data sequence $z_1,\dots,z_n$;
the rest is noise and can be forgotten.
In statistical terminology, the summary is a sufficient statistic.
Let me call this ``forgetting${}_2$''.
This is very different from forgetting (``forgetting${}_1$'') in the sense of reducing the filtration,
\eqref{eq:sigma-OCM}.
(This terminology is used only in this subsection;
elsewhere, ``forgetting'' always means ``forgetting${}_1$''.)

The main difference between forgetting${}_1$ and forgetting${}_2$
is that the latter is justified when we fully trust the model,
while the former is used when we are testing the model.
The information in $z_1,\dots,z_n$ that is not contained already in the summary
is noise only under the model,
but has great diagnostic value for testing the model;
on the other hand, the summary is of limited use for testing
(or even completely useless,
as when testing exchangeability for a finite time horizon).

\section{Element-wise testing partially works for a fixed horizon}
\label{sec:adequacy}

In this section we give theoretical results showing that the power of forgetting is limited,
unfortunately in a very weak sense.

\subsection{Finite horizon}

We start from a simple result for a finite \emph{horizon} $N$
(i.e., we have only $N$ observations,
or are only interested in the first $N$ observations).

\begin{proposition}\label{prop:exact}
  Let $N\in\{1,2,\dots\}$,
  and let $(S^\theta)$ be a family of test martingales
  w.r.\ to the same filtration (perhaps not natural)
  and a statistical model $(P_{\theta})$.
  Then there exists a family of natural test martingales $(\tilde S^\theta)$
  such that
  \[
    \inf_{\theta\in\Theta} \tilde S^\theta_N
    =
    \inf_{\theta\in\Theta} S^\theta_N.
  \]
\end{proposition}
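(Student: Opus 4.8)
The plan is to construct, for each $\theta\in\Theta$, the wanted natural test martingale $\tilde S^\theta$ by ``closing'' the given martingale at the horizon $N$ and projecting its terminal value onto the natural filtration. Write $(\GGG_n)$ for the common filtration carrying the $S^\theta$, so that $\GGG_n\subseteq\FFF_n$ for every $n$ (this is what ``poorer than, but perhaps equal to, the natural filtration'' means). Fixing $\theta$, I would first note that, $S^\theta$ being a test martingale w.r.\ to $(\GGG_n)$ and $P_\theta$, iterating $\E_{P_\theta}(S^\theta_{n+1}\mid\GGG_n)=S^\theta_n$ and taking expectations yields $\E_{P_\theta}(S^\theta_N)=S^\theta_0=1$, so that $S^\theta_N$ is a nonnegative $P_\theta$-integrable random variable.

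Then I would define
\[
  \tilde S^\theta_n
  :=
  \E_{P_\theta}\bigl(S^\theta_N \mid \FFF_n\bigr),
  \qquad n=0,1,\dots.
\]
By the tower property $(\tilde S^\theta_n)$ is a $P_\theta$-martingale w.r.\ to the natural filtration $(\FFF_n)$; it is nonnegative since $S^\theta_N\ge0$; and $\tilde S^\theta_0=\E_{P_\theta}(S^\theta_N)=1$ because $\FFF_0$ is trivial. Hence each $\tilde S^\theta$ is a natural test martingale under $P_\theta$, which is all that is demanded of it. (It depends on $\theta$ both through $S^\theta_N$ and through the conditioning measure $P_\theta$, but that is exactly the dependence that element-wise testing permits.)

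It remains to compare the two sides at time $N$. Since $\GGG_N\subseteq\FFF_N$, the variable $S^\theta_N$ is $\FFF_N$-measurable, and therefore
\[
  \tilde S^\theta_N
  =
  \E_{P_\theta}\bigl(S^\theta_N\mid\FFF_N\bigr)
  =
  S^\theta_N
\]
for every $\theta$; taking the infimum over $\theta$ gives the asserted equality (indeed the stronger pointwise one, $\tilde S^\theta_N=S^\theta_N$ for all $\theta$).

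At this fixed horizon there is no genuine obstacle: the whole argument reduces to the elementary fact that conditioning an already $\FFF_N$-measurable variable on the finer $\sigma$-algebra $\FFF_N$ changes nothing, and the only hypothesis worth double-checking is the inclusion $\GGG_N\subseteq\FFF_N$, which is harmless and holds in all the cases of interest (notably the pivotal and conformal examples of Sect.~\ref{sec:inadequacy}, where $S_N$ is a genuine function of $z_1,\dots,z_N$). What makes the result weak --- as the title of this section and the abstract warn --- is precisely that the identity is forced only at the single time $N$: for $n<N$ the process $\tilde S^\theta_n$ bears no simple relation to $S^\theta_n$ (it is just its $\GGG_n$-projection and need not even dominate it), and for $n>N$ one would have to extend $\tilde S^\theta$ arbitrarily, e.g.\ freezing it at its time-$N$ value and so discarding every later piece of evidence. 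Reproducing $\inf_\theta S^\theta_n$ simultaneously over a range of $n$ --- which is what one would really want --- is beyond the reach of this projection trick.
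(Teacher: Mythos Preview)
Your proof is correct and essentially identical to the paper's: both define $\tilde S^\theta$ by conditioning the terminal value $S^\theta_N$ on the natural filtration (the paper phrases this as iterated backward averaging, which is the same thing by the tower property), and you are slightly more explicit than the paper in checking that $S^\theta_N$ is $\FFF_N$-measurable via $\GGG_N\subseteq\FFF_N$. One small slip in your closing commentary: for $n<N$ it is $S^\theta_n$ that equals the $\GGG_n$-projection of $\tilde S^\theta_n$ (since $\GGG_n\subseteq\FFF_n$ and both are conditional expectations of $S^\theta_N$), not the other way around.
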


Of course, $(S^\theta)$ being a family of test martingales
w.r.\ to $(P_{\theta})$ means that $S^\theta$ is a test martingale w.r.\ to $P_{\theta}$
for each $\theta$.
For example, $(S^\theta)$ may be a family of test martingales
from the scheme of Sect.~\ref{sec:scheme}.
The natural element-wise test $\inf_{\theta}\tilde S^{\theta}_n$
in Proposition~\ref{prop:exact}
can sometimes be less than the original element-wise test $\inf_{\theta}S^{\theta}_n$
at some time $n<N$,
but it will eventually catch up
(always, not just almost surely).

\begin{proof}[Proof of Proposition~\ref{prop:exact}]
  Let us fix a family of test martingales $(S^\theta)$.
  The expectation of our capital $S^\theta_N$ at step $N$ is 1,
  and for each $\theta$ we get a natural test martingale $\tilde S^{\theta}_n$,
  $n\in\{0,1,\dots,N\}$,
  by setting $\tilde S^\theta_N := S^\theta_N$ and averaging backwards:
  \begin{equation}\label{eq:main}
    \tilde S^{\theta}_n
    :=
    \E^\theta(\tilde S^{\theta}_{n+1}\mid\FFF_n),
    \quad
    n=N-1,\dots,0,
  \end{equation}
  where $\E^\theta$ stands for the expectation w.r.\ to $P_\theta$.
\end{proof}

Suppose we are given a test martingale $S$ that is not natural,
such as the ones used in our examples
in Sect.~\ref{subsec:pivotal} and Sect.~\ref{subsec:conformal}.
A disadvantage of Proposition~\ref{prop:exact} is that for steps before $N$
the backward averaging \eqref{eq:main} may give a result different from
(and therefore not dominating)
$S_n$, $n<N$.
Another disadvantage of Proposition~\ref{prop:exact}
is that it ignores the complexity,
in any sense (computational, descriptional, etc.),
of the natural test martingale $\tilde S$.
While $S$ may be very easy to define and independent of $\theta$,
such as a Composite Jumper conformal test martingale
\citep{Vovk/etal:2022book},
$\tilde S$ will depend on $\theta$ and may be much more complicated.

The right-hand side of the definition \eqref{eq:S} of element-wise testing
involves $\inf_{\theta}$ and so does not even have to be measurable,
as we already mentioned,
and in some sense it is not even well-defined when $\Theta$ is uncountable:
for each $\theta\in\Theta$ the corresponding test martingale is defined
to within a $P_{\theta}$-null set,
which makes the definition of $\inf_{\theta}$ non-invariant
w.r.\ to the choice of versions of conditional distributions.
(For rich spaces $\Theta$ and $\Omega$,
we can even make $\inf_{\theta}\tilde S^{\theta}_n=0$, $n<N$,
by an awkward choice of versions of the conditional expectations
in \eqref{eq:main}.)

The idea in the proof of Proposition~\ref{prop:exact}
can also be applied to randomized test martingales
(such as conformal test martingales under the exchangeability model).
Suppose $S_n=S_n^\theta$ does not depend on $\theta$
(as conformal test martingales).
We can then average $S_N$ w.r.\ to the random numbers $\tau_1,\dots,\tau_N$
and after that apply averaging w.r.\ to the $\sigma$-algebras $\FFF_n$:
\begin{equation}\label{eq:mean_BK}
  \tilde S_N^\theta
  :=
  \E^{\tau}(S_N),
  \qquad
  \tilde S^\theta_n
  :=
  \E^\theta
  \left(
    \tilde S_N^\theta \mid \FFF_n
  \right),
  \quad
  n=N-1,\dots,0,
\end{equation}
where, of course, $\E^{\tau}$ refers to averaging over the random numbers
(produced independently from the uniform distribution on $[0,1]$);
of course, there is no actual dependence of $\tilde S_n^\theta$ on $\theta$ for $n:=N$.
We will then have
\[
  \inf_{\theta\in\Theta} \tilde S^\theta_N
  =
  \E^{\tau}(S_N).
\]

\subsection{Infinite horizon}

One more disadvantage of Proposition~\ref{prop:exact}
is that it is only applicable to a finite horizon.
We can generalize it by allowing $N$ to be, e.g., a bounded stopping time,
but a natural question is whether it holds asymptotically at infinity
for the infinite horizon $N:=\infty$.
The next proposition is a step in this direction,
but it is very restrictive (as we will discuss momentarily).

\begin{proposition}\label{prop:asymptotic}
  Suppose that the parameter set $\Theta$ is finite
  and that different $P_{\theta}$ in the statistical model (our null hypothesis) $(P_{\theta})$
  are mutually singular.
  Let $(S^\theta)$ be a family of test martingales w.r.\ to the same filtration and $(P_{\theta})$,
  and let $\epsilon>0$ (be arbitrarily small).
  Then there exists a family of natural test martingales $(\tilde S^\theta)$
  such that
  \begin{equation}\label{eq:asymptotic}
    \liminf_{n\to\infty} \inf_{\theta} \tilde S^{\theta}_n
    \ge
    (1-\epsilon)
    \limsup_{n\to\infty} \inf_{\theta} S^{\theta}_n
  \end{equation}
  a.s.\ under any probability measure $P_{\theta}$ from the null hypothesis.
\end{proposition}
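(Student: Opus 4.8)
The plan is to exploit mutual singularity: since the finitely many measures $P_\theta$ are pairwise singular, for each ordered pair $(\theta,\theta')$ with $\theta\neq\theta'$ there is an event on which $P_\theta$ is concentrated and $P_{\theta'}$ vanishes, and (by the martingale convergence theorem applied to a suitable likelihood ratio, or directly by Borel--Cantelli on a refining sequence of such events) one can build a natural test martingale that tends to $+\infty$ a.s.\ under $P_\theta$ while staying bounded --- indeed converging to $0$ is even better --- a.s.\ under every $P_{\theta'}$, $\theta'\neq\theta$. Concretely, fix for each $\theta$ a natural test martingale $M^\theta$ with $M^\theta_n\to\infty$ $P_\theta$-a.s.\ and $M^\theta_n\to 0$ $P_{\theta'}$-a.s.\ for $\theta'\neq\theta$; existence follows because $\Theta$ is finite and mutual singularity lets us diagonalize (take $M^\theta=\prod_{\theta'\neq\theta}$ of individual ``separating'' martingales, or a single martingale detecting the $P_\theta$-support event realized along a sequence of stopping times).

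Next I would apply Proposition~\ref{prop:exact} on a growing sequence of horizons. For each $N$ we get natural test martingales $(\tilde S^{\theta,N})$ with $\inf_\theta \tilde S^{\theta,N}_N = \inf_\theta S^\theta_N$; the trouble, as the remarks after Proposition~\ref{prop:exact} stress, is that these say nothing for $n\neq N$ and do not patch together into a single family over all $n$. So instead I would fix one large but finite horizon $N$, chosen so that $\inf_\theta S^\theta_N \ge (1-\epsilon/2)\limsup_n \inf_\theta S^\theta_n$ off an event of small probability --- but since the right-hand side of \eqref{eq:asymptotic} need not be attained, this already requires care; the cleaner route is to define, for a threshold $c < \limsup_n \inf_\theta S^\theta_n$ and each $\theta$, the stopping time $\rho^\theta := \inf\{n : S^\theta_n \ge c\}$ (finite on the relevant event), and average $S^\theta$ backward from the \emph{bounded} stopping time $\rho := \min_\theta(\rho^\theta \wedge N)$ as in the generalization of Proposition~\ref{prop:exact} to bounded stopping times mentioned in the text, obtaining $\tilde S^{\theta}$ with $\tilde S^\theta_{\rho}\ge$ (the frozen value). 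This gives a \emph{time-$\rho$} guarantee, not an eventual one.

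To upgrade a fixed-time guarantee into an asymptotic $\liminf$ one, I would then \emph{mix} the frozen natural martingale with the separating martingales $M^\theta$. Define $\tilde S^\theta := (1-\delta)\,\hat S^\theta + \delta\, M^\theta$ for small $\delta$, where $\hat S^\theta$ is the backward-averaged martingale that equals $S^\theta_{\rho}$ at time $\rho$ and is then \emph{continued} beyond $\rho$ by the $P_\theta$-martingale $M^\theta/M^\theta_\rho$ scaled to match --- i.e.\ after $\rho$ we ``relaunch'' capital proportional to $M^\theta$. The point of the $M^\theta$ component is: under $P_\theta$, $M^\theta_n\to\infty$, so $\inf_{\theta'}\tilde S^{\theta'}_n$ is governed by whichever $\theta'$ is ``wrong'', namely by $M^{\theta'}_n\to 0$, which is bad --- so this naive mixture fails. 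The correct fix is the opposite: relaunch, after time $\rho$, capital proportional to $\prod_{\theta'\neq\theta}(\text{a martingale tending to }0\text{ under }P_{\theta})$ is impossible, so one instead relaunches under $P_\theta$ a martingale that \emph{grows} under $P_\theta$; since $\Theta$ is finite and the measures are singular, for the single true parameter $\theta^\*$ the quantity $\inf_{\theta}\tilde S^\theta_n$ equals $\min$ over the one ``matching'' index (kept at $\ge (1-\epsilon)c$ by freezing) and the mismatched indices (which we push to $+\infty$ under $P_{\theta^\*}$ using their separating martingales). Thus the infimum stays $\ge (1-\epsilon)c$ eventually, and letting $c\uparrow\limsup_n\inf_\theta S^\theta_n$ and $\delta\downarrow 0$ through a countable sequence with a union bound gives \eqref{eq:asymptotic} a.s.\ under each $P_{\theta^\*}$.

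The main obstacle is precisely this last synchronization step: Proposition~\ref{prop:exact} only controls the natural martingales at the single chosen horizon, and the gambler ``does not know'' the true $\theta^\*$, so after freezing we must simultaneously (i) keep the $\theta^\*$-component from falling below $(1-\epsilon)\limsup$, which backward averaging from a bounded stopping time handles, and (ii) drive every $\theta\neq\theta^\*$ component to $+\infty$ along the $P_{\theta^\*}$-generic path, which is exactly where mutual singularity of a \emph{finite} family is indispensable --- it is what lets a \emph{natural} test martingale under $P_\theta$ detect, and profit unboundedly from, the event that the data are actually $P_{\theta^\*}$-distributed. Handling the fact that $\limsup_n\inf_\theta S^\theta_n$ is a $\limsup$ (not attained, possibly $+\infty$) is the remaining fiddly point, dealt with by the $c\uparrow$ exhaustion and a countable union bound over $c$ and $\delta$.
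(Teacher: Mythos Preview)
Your high-level plan --- mix a ``main'' component with ``separating'' components that blow up under the wrong parameter --- is exactly the structure the paper uses, but two concrete pieces are missing or broken.

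First, the separating martingales are specified in the wrong direction. A natural test martingale $M^\theta$ under $P_\theta$ cannot satisfy $M^\theta_n\to\infty$ $P_\theta$-a.s.: by Doob's convergence theorem it converges $P_\theta$-a.s.\ to a finite limit with expectation at most~1. What is needed (and what the paper takes) is, for each ordered pair $\theta\neq\theta'$, a natural test martingale $S^{\theta,\theta'}$ \emph{under $P_\theta$} with $S^{\theta,\theta'}_n\to\infty$ $P_{\theta'}$-a.s., e.g.\ the likelihood ratio $\dd P_{\theta'}/\dd P_\theta$ (or its modification when local absolute continuity fails). You partially self-correct later, but the initial $M^\theta$ as stated does not exist.

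Second, and more seriously, the ``main component'' is the real gap. You try to manufacture it from the finite-horizon Proposition~\ref{prop:exact} via bounded stopping times, freezing, relaunching, and a $c\uparrow\limsup$ exhaustion. But the right-hand side of \eqref{eq:asymptotic} is a \emph{random} quantity, so exhausting over deterministic thresholds $c$ forces you to build a separate family $(\tilde S^{\theta,c})$ for each $c$; to get one family you would have to mix over countably many $c$, which costs multiplicative weights and does not recover the full factor $(1-\epsilon)$. The ``union bound'' you invoke handles null sets, not the construction of a single family. The paper sidesteps all of this with a one-line construction you never reach: by Doob's theorem $S^\theta_\infty:=\lim_n S^\theta_n$ exists $P_\theta$-a.s.\ with $\E^\theta S^\theta_\infty\le 1$ (scale it up to~1), and then
\[
  \tilde S^\theta_n:=\E^\theta\bigl(S^\theta_\infty\mid\FFF_n\bigr)
\]
is a natural test martingale which, by L\'evy's upward theorem, converges $P_\theta$-a.s.\ to $S^\theta_\infty=\lim_n S^\theta_n$. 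Mixing this with the $S^{\theta,\theta'}$ (weight $1-\epsilon$ on the former, $\epsilon/(\lvert\Theta\rvert-1)$ on each of the latter) immediately gives: under the true $P_{\theta^*}$, the $\theta^*$-component has $\liminf\ge(1-\epsilon)\lim_n S^{\theta^*}_n$, while every $\theta\neq\theta^*$ component tends to~$\infty$; hence $\liminf_n\min_\theta\tilde S^\theta_n\ge(1-\epsilon)\min_\theta\limsup_n S^\theta_n$. No stopping times, no horizons, no exhaustion.
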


As in the case of Proposition~\ref{prop:exact},
Proposition~\ref{prop:asymptotic} says that, even when a natural test martingale $\tilde S^\theta$
falls below the original test martingale $S^\theta$,
it will eventually catch up
(or at least almost catch up, to within any $\epsilon$ on the relative scale).
The most restrictive condition in Proposition~\ref{prop:asymptotic}
is that $\Theta$ is finite
(although it can be as dense as we wish).

Another restrictive condition in Proposition~\ref{prop:asymptotic}
is that different $P_\theta$ are required to be mutually singular.
This condition often holds for interesting statistical models;
for example, in the IID case it follows from Kakutani's theorem \citep{Kakutani:1948}
that $P_\theta$ corresponding to different $\theta$
are either identical or mutually singular.
Moreover, we can often even identify $\theta$ in the limit almost surely
given a sequence observations generated from $P_\theta$
(formally, there exists a strongly consistent estimator for $\theta$).

In \eqref{eq:asymptotic} we have $\liminf$ and $\limsup$ instead of just $\lim$.
For the $\liminf$ it is not essential, and we can replace it by $\lim$,
meaning that the limit will exist almost surely
(although it can be $\infty$).
Having $\limsup$ is essential,
but let me discuss it at the end of the proof.

\begin{proof}[Proof of Proposition~\ref{prop:asymptotic}]
  By Doob's convergence theorem \citep[Corollary 7.4.3]{Shiryaev:2019},
  \begin{equation}\label{eq:S^theta}
    S^{\theta}_{\infty}
    :=
    \lim_{n\to\infty}
    S^\theta_n
  \end{equation}
  exists almost surely under $P_{\theta}$.
  Without loss of generality we assume that its $\E^\theta$-expectation is 1
  (its expectation is at most 1 by Fatou's lemma,
  and we can scale it up if the expectation is below 1).
  For each $\theta\in\Theta$ let us define the natural test martingale
  \begin{equation}\label{eq:S^theta-backward}
    \tilde S^{\theta}_n
    :=
    \E^{\theta}(S^{\theta}_{\infty}\mid\FFF_n),
    \quad
    n=0,1,2,\dots;
  \end{equation}
  remember that this process is a test martingale only under $P_{\theta}$.
  By L\'evy's theorem \citep[Theorem 7.4.3]{Shiryaev:2019}
  we have
  \[
    \tilde S^{\theta}_n
    \to
    S^{\theta}_{\infty}
  \]
  a.s.\ under $P_{\theta}$.
  Since this convergence holds only $P_\theta$-almost surely,
  we need to ``regularize'' $\tilde S^{\theta}$
  to ensure its desired behaviour under $P_{\theta'}$ for $\theta'\ne\theta$.

  For each pair $\theta,\theta'\in\Theta$ with $\theta\ne\theta'$,
  fix a natural test martingale $S^{\theta,\theta'}$ w.r.\ to $P_{\theta}$
  such that
  \[
    \liminf_{n\to\infty}
    S^{\theta,\theta'}
    =
    \infty
    \quad
    \text{$P_{\theta'}$-a.s.}
  \]
  Such a test martingale can be defined
  as the likelihood ratio $\d P_{\theta'} / \d P_{\theta}$
  if $P_{\theta'}$ is locally absolutely continuous w.r.\ to $P_{\theta}$
  (see \citealt[Theorem 7.6.2]{Shiryaev:2019})
  and as an obvious modification of the likelihood ratio otherwise.
  Now we can redefine
  \begin{equation}\label{eq:redefine}
    \tilde S^{\theta}_n
    :=
    (1-\epsilon)
    \tilde S^{\theta}_n
    +
    \frac{\epsilon}{\lvert\Theta\rvert-1}
    \sum_{\theta'\in\Theta\setminus\{\theta\}}
    S^{\theta,\theta'}_n
  \end{equation}
  (assuming, without loss of generality, that $\lvert\Theta\rvert>1$).

  Under $P_\theta$, we have, a.s.,
  \begin{equation}\label{eq:lim-1}
    \lim_{n\to\infty} \tilde S^{\theta}_n
    \ge
    (1-\epsilon)
    S^{\theta}_{\infty}
    =
    (1-\epsilon)
    \lim_{n\to\infty} S^{\theta}_n,
  \end{equation}
  and under $P_{\theta'}$, $\theta'\ne\theta$,
  we have, a.s.,
  \begin{equation}\label{eq:lim-2}
    \liminf_{n\to\infty} \tilde S^{\theta}_n
    \ge
    \frac{\epsilon}{\lvert\Theta\rvert-1}
    \liminf_{n\to\infty} S^{\theta,\theta'}_n
    =
    \infty.
  \end{equation}
  Combining \eqref{eq:lim-1} and \eqref{eq:lim-2},
  we obtain, almost surely under any element of the statistical model,
  \[
    \liminf_{n\to\infty}
    \min_{\theta\in\Theta}
    \tilde S^{\theta}_n
    =
    \min_{\theta\in\Theta}
    \liminf_{n\to\infty}
    \tilde S^{\theta}_n
    \ge
    (1-\epsilon)
    \min_{\theta\in\Theta}
    \limsup_{n\to\infty} S^{\theta}_n.
  \]

  Now we can discuss in detail
  the role of the $\liminf$ and $\limsup$ in \eqref{eq:asymptotic}.
  The $\liminf$ can be replaced by $\lim$
  since $\lim_{n\to\infty}\tilde S_n^\theta$ exists (and is finite, almost surely)
  under $P_\theta$ by Doob's convergence theorem
  and exists (and is $\infty$, almost surely) under $P_{\theta'}$ for $\theta'\ne\theta$
  because of the components $S_n^{\theta,\theta'}$ in \eqref{eq:redefine}.
  As for the $\limsup$, $\lim_{n\to\infty}S_n^\theta$ exists almost surely
  under $P_\theta$,
  but there are no constraints on $S_n^\theta$'s behaviour
  under $P_{\theta'}$ for $\theta'\ne\theta$;
  therefore, it is essential to have $\limsup$
  (unless we are willing to weaken \eqref{eq:asymptotic}).
\end{proof}

\subsection{Creating natural test martingales out of likelihood ratios}

So far in this section we were discussing creating natural test martingales out
of other test martingales (those w.r.\ to a reduced filtration).
But the process has a bottleneck:
first we define an e-variable
(i.e., a nonnegative random variable with expectation 1,
such as \eqref{eq:S^theta})
and then average it w.r.\ to a filtration (as in \eqref{eq:S^theta-backward}).
An easier option is to start directly from an e-variable over the first $N$ observations,
in the case of a finite horizon $N$.

When testing an online compression model (such as exchangeability),
this model serves as our null hypothesis.
We also fix an \emph{alternative hypothesis},
which, in the simplest case, is a probability measure $Q$ on the sample space.
(It can be the mixture of a Bayesian model,
as in \citealt[Sect.~9.2]{Vovk/etal:2022book}.)

What is really important for us is not $Q$ itself,
but a regular conditional probability generated from $Q$,
which in fact carries less information than $Q$ does.
(See, e.g., \citealt[Sect.~II.89]{Rogers/Williams:2000},
for a standard theorem about the existence of a suitable regular conditional probability.)
As a replacement for $Q$ in the context of testing an online compression model
with a summary statistic $t$,
we let $q$ to be a Markov kernel
mapping each $\sigma\in t(\mathbf{Z}^N)$
to a probability measure $q(\sigma)$ on the set $t_N^{-1}(\sigma)$.
To obtain a family of martingales $(S^\theta)$ from an alternative hypothesis,
in the case of a finite horizon $N$,
we can proceed as in Proposition~\ref{prop:exact},
namely we set
\begin{equation}\label{eq:batch_benchmark}
  S_N^{\theta}
  :=
  \frac{\dd q(\sigma_N)}{\dd P_N(\sigma_N)},
  \quad
  S_n^{\theta}
  :=
  \E^{\theta}
  \left(
    S_N^{\theta}
    \mid
    \FFF_n
  \right),
  \quad
  \theta\in\Theta,
  \quad
  n=0,\dots,N-1,
\end{equation}
where $P_N(\sigma_N)$ is the probability measure (on $t_N^{-1}(\sigma)$)
in the corresponding repetitive structure.
We will see some experimental results for the final value
\begin{equation}\label{eq:batch}
  \frac{\dd q(\sigma_N)}{\dd P_N(\sigma_N)}
\end{equation}
(which we call the \emph{batch benchmark})
of these test martingales in the next section.

In the case of an infinite horizon
and in the spirit of Proposition~\ref{prop:asymptotic},
we have an open problem.
Consider the sequence of the summaries $\sigma_1,\sigma_2,\dots$
generated from the alternative hypothesis $Q$.
For each of them (and for each value of $\theta$)
define the likelihood ratio martingale \eqref{eq:batch_benchmark}.
Under what conditions do these test martingales converge?
And if they do, can the limit be used for hypothesis testing?

\section{Illustration: the problem of change detection}
\label{sec:experiments}

In this section I will illustrate several points raised in the previous sections
using a simple setting of changepoint detection with a finite horizon.
Chapter~9 of \citet{Vovk/etal:2022book} shows numerous examples
where conformal testing
(usually implemented as the Bayes--Kelly, or \emph{BK}, conformal test martingale)
is very close to natural benchmarks.
In the setting of this section
the difference is deliberately made more pronounced.
Namely, the observation space is $\mathbf{Z}:=\{0,1\}$,
the null hypothesis is the randomness model
$(\mathbf{B}_{\theta}^{20}\mid\theta\in[0,1]$),
$\mathbf{B}_\theta$ being the Bernoulli distribution on $\{0,1\}$
with parameter $\theta$
(which is the probability of 1, $\mathbf{B}_{\theta}(\{1\})=\theta$),
the alternative hypothesis $Q$ is that
$N_0:=10$ observations are generated from the Bernoulli distribution
$\mathbf{B}_{\pi_0}=\mathbf{B}_{0.1}$
with parameter $\pi_0:=0.1$,
and another $N_1:=10$ observations are generated
from the Bernoulli distribution
$\mathbf{B}_{\pi_1}=\mathbf{B}_{0.9}$
with parameter $\pi_1:=0.9$.
(Notice that both $\pi_0$ and $\pi_1$ are probabilities of 1,
pre-change and post-change.)
All 20 observations are generated independently.
In this setting the time horizon is finite and very short, $N:=20$.

\begin{figure}[b]
  \begin{center}
    \includegraphics[width=0.48\textwidth]{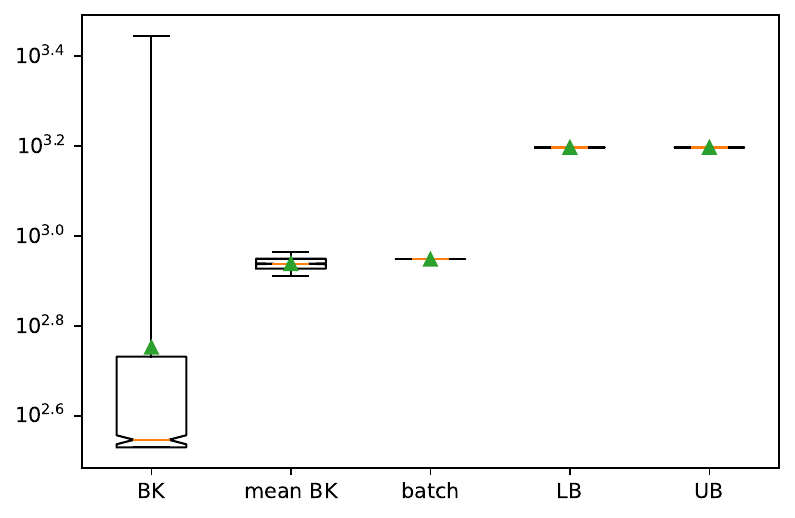}
    \hfill
    \includegraphics[width=0.48\textwidth]{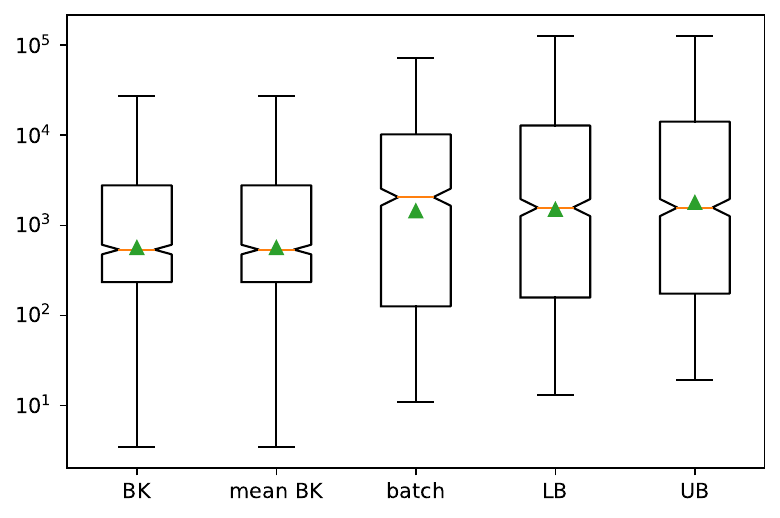}
  \end{center}
  \caption{Left panel:
    Five final values as described in text for a fixed dataset (for a changepoint detection problem).
    Right panel:
    Five final values as described in text for random datasets.}
  \label{fig:BK}
\end{figure}

We will only be interested in the final values of our martingales and related processes;
for some of these processes the intermediate values are easily computable,
but for others this is tricky (and requires further research).
The final values are shown in Fig.~\ref{fig:BK},
which will be explained in the rest of this section.

The boxplots in Fig.~\ref{fig:BK} represent results of $10^3$ independent simulations
of the final values (at time horizon 20) of five processes,
including the Bayes--Kelly martingale (BK).
Each boxplot shows the median as the horizontal orange line in the middle of a box,
with notches representing a confidence interval around the median,
the mean as a green triangle,
the interquartile range as a box,
and the $5\%$ and $95\%$ quantiles as whiskers.

The second process, ``mean BK'',
is an approximation to the expectation (cf.\ \eqref{eq:mean_BK})
of the BK test martingale $S_n$.
To compute its final value,
we compute the final values of $10^3$ independent simulations of the BK test martingale
and then average them.
If we average $S_n$ for each $n=0,\dots,N$,
the resulting process will no longer be a martingale,
as discussed in \citet[Sect.~9.3]{Vovk/etal:2022book}.
However, the final value will have expected value 1 under the null hypothesis
(and so will be a valid measure of evidence collected against the null hypothesis).

The last two processes in Fig.~\ref{fig:BK} are
the \emph{lower benchmark} (LB) and the \emph{upper benchmark} (UB).
The former is
\[
  \LB_n
  :=
  \inf_{\theta}
  \frac{Q(Z_1=z_1,\dots,Z_n=z_n)}{\mathbf{B}_\theta(\{z_1\})\dots\mathbf{B}_\theta(\{z_n\})},
\]
where $z_1,\dots,z_n$ are the realized values of the random observations $Z_1,\dots,Z_n$,
respectively,
and the latter is
\[
  \UB_n
  :=
  \frac{Q(Z_1=z_1,\dots,Z_n=z_n)}{\mathbf{B}_{0.5}(\{z_1\})\dots\mathbf{B}_{0.5}(\{z_n\})}.
\]
The lower benchmark is not a martingale under any $\mathbf{B}_{\theta}^{20}$,
but it is a valid measure of evidence against the null since for each $\theta$
it is dominated by the likelihood ratio
$Q(Z_1=z_1,\dots,Z_n=z_n)/(\mathbf{B}_\theta(\{z_1\})\dots\mathbf{B}_\theta(\{z_n\}))$.
On the other hand, the upper benchmark is only valid under $\mathbf{B}_{0.5}^{20}$
(which is, in a sense, the mid-point between the pre-change and post-change distributions),
and not valid under the other elements of the null hypothesis.

The middle process, the one labelled ``batch'',
is a new benchmark (which we called the ``batch benchmark'' earlier),
and we will discuss it at the end of this section.

The left panel of Fig.~\ref{fig:BK} shows
the final values of the BK martingale, mean BK martingale,
batch benchmark, and upper and lower benchmarks for a specific randomly generated dataset
(using our default seed 42 for the random number generator).
With a large probability,
the number of 1s in the dataset will be 10,
in which case the upper and lower benchmarks will in fact coincide,
as they do in Fig.~\ref{fig:BK}.

The final value of the mean BK martingale is higher
than that of the BK martingale in the left panel of Fig.~\ref{fig:BK},
and it is less volatile.
It is higher because averaging on the log scale is akin to taking maximum,
as we pointed out in \citet[Sect.~9.3]{Vovk/etal:2022book}.
It is clear that the genuine average (expectation) of the BK martingale
over the random numbers $\tau$ is even higher (with zero volatility),
but it is only marginally higher (as our other experiments show).

If the dataset is randomized, the difference is much less noticeable:
see the right panel of Fig.~\ref{fig:BK}.
In particular, the difference between BK and mean BK
is swamped by the variability due to the random choice of a dataset.
The three benchmarks, however, are still significantly higher
in mean and median.

Now let us spell out the batch benchmark \eqref{eq:batch},
shown in the middle boxplots of both panels of Fig.~\ref{fig:BK},
for this case.
Suppose the observed data sequence is $z_1,\dots,z_N$
and let
\[
  K := \sum_{n=1}^N z_n,
  \quad
  k_0 := \sum_{n=1}^{N_0} z_n,
  \quad
  k_1 := \sum_{n=N_0+1}^{N} z_n
\]
be the numbers of 1s among all observations,
among the pre-change observations,
and among the post-change observations,
respectively.
The probability of $z_1,\dots,z_N$ under the alternative is
\[
  \pi_0^{k_0} (1-\pi_0)^{N_0-k_0}
  \pi_1^{k_1} (1-\pi_1)^{N_1-k_1}
\]
and the number of data sequences leading to the same $k_0$ and $k_1$ is
\[
  \binom{N_0}{k_0}
  \binom{N_1}{k_1}.
\]
The exchangeability summary
(i.e., the summary under the exchangeability compression model)
of $z_1,\dots,z_N$ is $K$,
and so the conditional probability of $z_1,\dots,z_N$
given its exchangeability summary
under the alternative hypothesis is
\begin{multline*}
  \frac
  {
    \pi_0^{k_0} (1-\pi_0)^{N_0-k_0}
    \pi_1^{k_1} (1-\pi_1)^{N_1-k_1}
  }
  {
    \sum_{k=(K-N_0)^+}^{K\wedge N_1}
    \binom{N_0}{K-k}
    \binom{N_1}{k}
    \pi_0^{K-k} (1-\pi_0)^{N_0-K+k}
    \pi_1^{k} (1-\pi_1)^{N_1-k}
  }\\
  =
  \frac{1}
  {
    \sum_{k=(K-N_0)^+}^{K\wedge N_1}
    \binom{N_0}{K-k}
    \binom{N_1}{k}
    \left(
      \frac{(1-\pi_0)\pi_1}{\pi_0(1-\pi_1)}
    \right)^{k-k_1}
  }.
\end{multline*}
In this formula, $k$ is the analogue of $k_1$
for the generic element of $t_N^{-1}(\sigma_N)$
(where $t_N$ and $\sigma_N$ refer to the exchangeability model),
and $K-k$ is the analogue of $k_0$.
It is clear that $k$ ranges from $(K-N_0)^+$
(where $u^+:=\max(u,0)$)
and $K\wedge N_1$ (where $u\wedge v:=\min(u,v)$);
it is easy to check directly that $(K-N_0)^+\le K\wedge N_1$.
The conditional probability of $z_1,\dots,z_N$
given its exchangeability summary
under the null hypothesis is
\[
  1/\binom{N}{K},
\]
which gives the explicit expression
\[
  \frac{\binom{N}{K}}
  {
    \sum_{k=(K-N_0)^+}^{K\wedge N_1}
    \binom{N_0}{K-k}
    \binom{N_1}{k}
    \left(
      \frac{(1-\pi_0)\pi_1}{\pi_0(1-\pi_1)}
    \right)^{k-k_1}
  }
\]
for the batch benchmark \eqref{eq:batch} that we use in our experiments.

The right panel of Fig.~\ref{fig:BK} shows that the batch benchmark is competitive
with the lower and upper benchmarks.
It looks a promising option.
Its advantage over the upper benchmark is obvious:
it is valid under any power probability measure, not just under $\mathbf{B}_{0.5}^N$.
One advantage over the lower benchmark is that it is admissible for each parameter value $\theta$,
whereas the inadmissibility of the lower benchmark for some $\theta$ is obvious.

\section{Conclusion}
\label{sec:conclusion}

I have mentioned several directions of further research
in the previous sections,
but these are a few more:
\begin{itemize}
\item
  In Sect.~\ref{subsec:conformal} we saw that for the model $(\NNN^{\infty}_{\mu,1})$
  the element-wise tests are not fully adequate.
  It would be interesting to quantify this observation
  and to extend it to other online compression models.
\item
  In the examples of Sect.~\ref{sec:inadequacy}
  we used the fact (see \eqref{eq:p_2})
  that the reduced $\sigma$-algebras $\FFF'_2$ coincide
  for the pivotal and online compression methods
  in the case of the Gaussian model with variance 1.
  It can be shown that the other non-trivial reduced $\sigma$-algebras $\FFF'_n$,
  $n>2$,
  also coincide for that model.
  In general, however, the two methods may involve
  very different degrees of forgetting,
  which would be interesting to formalize and quantify for different models.
\item
  Can we apply Proposition~\ref{prop:exact} and \eqref{eq:mean_BK}
  (or their elaborations)
  to get explicit expressions for the natural modifications
  (i.e., modifications that are test martingales w.r.\ to the natural filtration)
  of the numerous conformal test martingales
  described in \citet[Part~III]{Vovk/etal:2022book}?
\item
  Relaxing the assumptions of Proposition~\ref{prop:asymptotic}
  (such as $\Theta$ being finite)
  or showing that it is impossible.
\item
  In Sect.~\ref{sec:experiments} we only studied
  the final values of various test martingales.
  Their intermediate values deserve to be studied
  both theoretically and experimentally.
\end{itemize}

A characteristic feature of conformal testing is that part of the data is forgotten
in the process of gambling against the null hypothesis (such as exchangeability).
On the other hand,
the same test martingale works against every probability measure in the null hypothesis.
We have seen that forgetting is essential,
even if our gambling strategy is allowed to depend on a probability measure
in the null hypothesis.

We have also seen that we can get rid of forgetting, but to a very limited extent.
It is not clear at all how the power and versatility of conformal testing
can be achieved without forgetting,
and it appears that, at least for the time being,
we should embrace the need for forgetting and live with it.

\subsection*{Acknowledgement}

Thanks to Wouter Koolen for an illuminating discussion of element-wise testing
as formalization of online testing during ISIPTA 2019 (3--6 July 2019).
Thanks to Aaditya Ramdas for another illuminating discussion
during a meeting in Amsterdam between the authors of \citet{Ramdas/etal:2023}
hosted by Peter Gr\"unwald (4--6 June 2022).
While the first discussion concentrated on advantages of element-wise testing,
the second one was all about their limitations.

Reviewers' comments on the conference version of this paper
led to numerous improvements in presentation,
and I appreciate their effort.


\begin{thebibliography}{32}
\providecommand{\natexlab}[1]{#1}
\providecommand{\url}[1]{\texttt{#1}}
\expandafter\ifx\csname urlstyle\endcsname\relax
  \providecommand{\doi}[1]{doi: #1}\else
  \providecommand{\doi}{doi: \begingroup \urlstyle{rm}\Url}\fi

\bibitem[Bandyopadhyay and Forster(2011)]{BF:2011}
Prasanta~S. Bandyopadhyay and Malcolm~R. Forster, editors.
\newblock \emph{Philosophy of Statistics}.
\newblock Elsevier, Amsterdam, 2011.

\bibitem[Barnard(1946)]{Barnard:1946}
George~A. Barnard.
\newblock Sequential tests in industrial statistics (with discussion).
\newblock \emph{Supplement to the Journal of the Royal Statistical Society},
  8:\penalty0 1--26, 1946.

\bibitem[Barnard(1947)]{Barnard:1947}
George~A. Barnard.
\newblock ``{Sequential} {Analysis}'' by {Abraham} {Wald}: {Review}.
\newblock \emph{Journal of the American Statistical Association}, 42:\penalty0
  658--665, 1947.

\bibitem[Bienvenu et~al.(2011)Bienvenu, G\'acs, Hoyrup, Rojas, and
  Shen]{Bienvenu/etal:2011}
Laurent Bienvenu, Peter G\'acs, Mathieu Hoyrup, Cristobal Rojas, and Alexander
  Shen.
\newblock Algorithmic tests and randomness with respect to a class of measures.
\newblock \emph{Proceedings of the Steklov Institute of Mathematics},
  274:\penalty0 34--89, 2011.

\bibitem[Byster(2014)]{Byster:2014}
Mike Byster.
\newblock \emph{The Power of Forgetting}.
\newblock Harmony, New York, 2014.

\bibitem[Cox(1958)]{Cox:1958}
David~R. Cox.
\newblock Some problems connected with statistical inference.
\newblock \emph{Annals of Mathematical Statistics}, 29:\penalty0 357--372,
  1958.

\bibitem[Fisher(1925{\natexlab{a}})]{Fisher:1925applications}
Ronald~A. Fisher.
\newblock Applications of ``{Student's}'' distribution.
\newblock \emph{Metron}, 5:\penalty0 90--104, 1925{\natexlab{a}}.

\bibitem[Fisher(1925{\natexlab{b}})]{Fisher:1925book}
Ronald~A. Fisher.
\newblock \emph{Statistical Methods for Research Workers}.
\newblock Oliver and Boyd, Edinburgh, 1925{\natexlab{b}}.

\bibitem[Fisher(1935)]{Fisher:1935book}
Ronald~A. Fisher.
\newblock \emph{The Design of Experiments}.
\newblock Oliver and Boyd, Edinburgh, 1935.

\bibitem[G\'acs(2021)]{Gacs:arXiv2105}
Peter G\'acs.
\newblock Lecture notes on descriptional complexity and randomness.
\newblock Technical Report
  \href{https://arxiv.org/abs/2105.04704}{arXiv:2105.04704 [cs.IT]},
  \href{https://arxiv.org/}{arXiv.org} e-Print archive, May 2021.

\bibitem[Hacking(1965)]{Hacking:1965}
Ian Hacking.
\newblock \emph{Logic of Statistical Inference}.
\newblock Cambridge University Press, Cambridge, 1965.

\bibitem[Kakutani(1948)]{Kakutani:1948}
Shizuo Kakutani.
\newblock On equivalence of infinite product measures.
\newblock \emph{Annals of Mathematics}, 49:\penalty0 214--224, 1948.

\bibitem[McCullagh et~al.(2009)McCullagh, Vovk, Nouretdinov, Devetyarov, and
  Gammerman]{McCullagh/etal:2009}
Peter McCullagh, Vladimir Vovk, Ilia Nouretdinov, Dmitry Devetyarov, and Alex
  Gammerman.
\newblock Conditional prediction intervals for linear regression.
\newblock In \emph{Proceedings of the Eighth International Conference on
  Machine Learning and Applications (ICMLA 2009)}, pages 131--138, 2009.
\newblock Available from
  \url{http://www.stat.uchicago.edu/~pmcc/reports/predict.pdf} (accessed in
  February 2024).

\bibitem[Neyman and Pearson(1933)]{Neyman/Pearson:1933}
Jerzy Neyman and Egon~S. Pearson.
\newblock On the problem of the most efficient tests of statistical hypotheses.
\newblock \emph{Philosophical Transactions of the Royal Society of London A},
  231:\penalty0 289--337, 1933.

\bibitem[Pearson(1900)]{Pearson:1900}
Karl Pearson.
\newblock On the criterion that a given system of deviations from the probable
  in the case of correlated system of variables is such that it can be
  reasonably supposed to have arisen from random sampling.
\newblock \emph{Philosophical Magazine}, 50:\penalty0 157--175, 1900.

\bibitem[Ramdas et~al.(2022)Ramdas, Ruf, Larsson, and Koolen]{Ramdas/etal:2022}
Aaditya Ramdas, Johannes Ruf, Martin Larsson, and Wouter~M. Koolen.
\newblock Testing exchangeability: {Fork}-convexity, supermartingales and
  e-processes.
\newblock \emph{International Journal of Approximate Reasoning}, 141:\penalty0
  83--109, 2022.

\bibitem[Ramdas et~al.(2023)Ramdas, Gr\"unwald, Vovk, and
  Shafer]{Ramdas/etal:2023}
Aaditya Ramdas, Peter Gr\"unwald, Vladimir Vovk, and Glenn Shafer.
\newblock Game-theoretic statistics and safe anytime-valid inference.
\newblock \emph{Statistical Science}, 38:\penalty0 576--601, 2023.

\bibitem[Rogers and Williams(2000)]{Rogers/Williams:2000}
L.~Chris~G. Rogers and David Williams.
\newblock \emph{Diffusions, Markov Processes, and Martingales}.
\newblock Cambridge University Press, Cambridge, second edition, 2000.

\bibitem[Schweder and Hjort(2016)]{Schweder/Hjort:2016}
Tore Schweder and Nils~L. Hjort.
\newblock \emph{Confidence, Likelihood, Probability: Sta\-tis\-ti\-cal
  Inference with Confidence Distributions}.
\newblock Cambridge University Press, Cambridge, 2016.

\bibitem[Shafer and Vovk(2001)]{Shafer/Vovk:2001}
Glenn Shafer and Vladimir Vovk.
\newblock \emph{Probability and Finance: It's Only a Game!}
\newblock Wiley, New York, 2001.

\bibitem[Shafer and Vovk(2019)]{Shafer/Vovk:2019}
Glenn Shafer and Vladimir Vovk.
\newblock \emph{Game-The\-o\-ret\-ic Foundations for Probability and Finance}.
\newblock Wiley, Hoboken, NJ, 2019.

\bibitem[Shen et~al.(2018)Shen, Liu, and Xie]{Shen/etal:2018}
Jieli Shen, Regina Liu, and Minge Xie.
\newblock Prediction with confidence---a general framework for predictive
  inference.
\newblock \emph{Journal of Statistical Planning and Inference}, 195:\penalty0
  126--140, 2018.

\bibitem[Shiryaev(2019)]{Shiryaev:2019}
Albert~N. Shiryaev.
\newblock \emph{Probability-2}.
\newblock Springer, New York, third edition, 2019.

\bibitem[{Student (William S.\ Gosset)}(1908)]{Gosset:1908}
{Student (William S.\ Gosset)}.
\newblock The probable error of a mean.
\newblock \emph{Biometrika}, 6:\penalty0 1--25, 1908.

\bibitem[Ville(1939)]{Ville:1939}
Jean Ville.
\newblock \emph{Etude critique de la notion de collectif}.
\newblock Gauthier-Villars, Paris, 1939.

\bibitem[Vovk(1986)]{Vovk:1986-proofs}
Vladimir Vovk.
\newblock On the concept of the {Bernoulli} property.
\newblock \emph{Russian Mathematical Surveys}, 41:\penalty0 247--248, 1986.
\newblock Another English translation with proofs:
  \href{https://arXiv.org/abs/1612.08859}{arXiv:\allowbreak 1612.\allowbreak
  08859}.

\bibitem[Vovk and V'yugin(1993)]{Vovk/Vyugin:1993}
Vladimir Vovk and Vladimir~V. V'yugin.
\newblock On the empirical validity of the {Bayesian} method.
\newblock \emph{Journal of the Royal Statistical Society B}, 55:\penalty0
  253--266, 1993.

\bibitem[Vovk and V'yugin(1994)]{Vovk/Vyugin:1994}
Vladimir Vovk and Vladimir~V. V'yugin.
\newblock Prequential level of impossibility with some applications.
\newblock \emph{Journal of the Royal Statistical Society B}, 56:\penalty0
  115--123, 1994.

\bibitem[Vovk et~al.(2022)Vovk, Gammerman, and Shafer]{Vovk/etal:2022book}
Vladimir Vovk, Alex Gammerman, and Glenn Shafer.
\newblock \emph{Algorithmic Learning in a Random World}.
\newblock Springer, Cham, second edition, 2022.

\bibitem[Wald(1945)]{Wald:1945}
Abraham Wald.
\newblock Sequential tests of statistical hypotheses.
\newblock \emph{Annals of Mathematical Statistics}, 16:\penalty0 117--186,
  1945.

\bibitem[Wald(1947)]{Wald:1947}
Abraham Wald.
\newblock \emph{Sequential Analysis}.
\newblock Wiley, New York, 1947.

\bibitem[Xie and Singh(2013)]{Xie/Singh:2013}
Minge Xie and Kesar Singh.
\newblock Confidence distribution, the frequentist distribution estimator of a
  parameter: a review.
\newblock \emph{International Statistical Review}, 81:\penalty0 3--39, 2013.
\end{thebibliography}
\end{document}